\title{Ancient Solutions of the Mean Curvature Flow}
\author{Robert Haslhofer and Or Hershkovits\thanks{O.H. has been partially supported by NSF grant DMS 1105656.}}
\date{}
\numberwithin{equation}{section}
  \theoremstyle{plain}
 \newtheorem{theorem}[equation]{Theorem}
 \newtheorem{claim}[equation]{Claim}
 \theoremstyle{remark}
 \newtheorem{remark}[equation]{Remark}
 \theoremstyle{remark}
\theoremstyle{definition}
 \newtheorem{definition}[equation]{Definition}
\newcommand{\eps}{\varepsilon}
\newcommand{\al}{\alpha}
\providecommand{\abs}[1]{\lvert #1\rvert}
\newcommand{\R}{\mathbb{R}}
\begin{document}
\maketitle

\begin{abstract}
In this short article, we prove the existence of ancient solutions of the mean curvature flow that for $t\to 0$ collapse to a round point, but for $t\to -\infty$ become more and more oval: near the center they have asymptotic shrinkers modeled on round cylinders $S^j\times \mathbb{R}^{n-j}$ and near the tips they have asymptotic translators modeled on $\mathrm{Bowl}^{j+1}\times \mathbb{R}^{n-j-1}$. We also give a characterization of the round shrinking sphere among ancient $\alpha$-Andrews flows. Our proofs are based on the recent estimates of Haslhofer-Kleiner \cite{HK}. 
\end{abstract}

\section{Introduction}

In this article, we study ancient solutions of the mean curvature flow. Recall that a one-parameter family of embedded hypersurfaces ${M_t\subset \R^{n+1}}$ moves by mean curvature flow if the normal velocity at each point is given by the mean curvature vector. A solution is called ancient if it is defined on a time interval $(-\infty,T)$, $T\leq \infty$. Ancient solutions typically arise in the study of singularities and of high curvature regions (see e.g. \cite{Ham,Whi00,Whi03,Eck,HS3,HK,HK2}). They also arise in conformal field theory, where they describe the ultraviolet regime of the boundary renormalization group equation (see e.g. \cite{Leigh,Fateev,Bakas}).

Daskalopoulos, Hamilton and Sesum have obtained a complete classification of ancient solutions in the case of closed embedded curves \cite{DHS}.
In higher dimensions, based on formal matched asymptotics, Angenent recently conjectured the existence of ancient ovals \cite{Ang}, i.e. ancient solutions that for $t\to 0$ collapse to a round point, but for $t\to -\infty$ become more and more oval in the sense that they look like round cylinders $S^j\times \mathbb{R}^{n-j}$ near the central region and like translating solitons $\mathrm{Bowl}^{j+1}\times \mathbb{E}^{n-j-1}$ near the tips.
In fact, a variant of this conjecture has been proved already by White \cite[p. 134]{Whi03}.
Namely, by considering convex regions of increasing eccentricity and using a limiting argument, he proved the existence of ancient flows of compact, convex sets
that are not selfsimilar. Although not explicitly stated there, it of course follows from Huisken's monotonicity formula \cite{Hui2} and the arguments in \cite{Whi00,Whi03}, that the tangent flows at the singularity are shrinking spheres and the tangent flows at infinity are shrinking cylinders (since otherwise the whole flow would be selfsimilar).

The main purpose of this article is to carry out White's construction in more detail (including in particular the study of the geometry at the tips), and thus to give a rigorous and simple proof for the existence of ancient ovals, as conjectured by Angenent. We write $H$ for the mean curvature, $A$ for the second fundamental form, $\kappa_1\leq\ldots \leq\kappa_n$ for the principal curvatures, and $\mathrm{Bowl}$ for the unique rotationally invariant translating soliton \cite{AltWu}, normalized such that the mean curvature at the tip equals one.

\begin{theorem}[Existence of ancient ovals]\label{Main_Thm}
For every $1\leq j \leq n-1$, there exists an ancient solution $\{M_t\subset \mathbb{R}^{n+1}\}_{t\in (-\infty,0)}$ of the mean curvature flow with compact and strictly convex time slices that for $t\to 0$ converges to a round point and for $t\to -\infty$ has the following asymptotics:
\begin{itemize}
\item\label{MT_Asym_Cyl} asymptotic shrinker: for $\lambda\to\infty$ the parabolically rescaled flows $\lambda^{-1}M_{\lambda^{2}t}$ converge to the round shrinking cylinder $S^j(\sqrt{2j\abs{t}})\times \R^{n-j}$.
\item\label{MT_Trans} asymptotic translators: given any direction $v\in{0}_{\mathbb{R}^{j+1}}\times \mathbb{S}^{n-j-1}$, the blowdowns at the tip in direction $v$ (see Claim \ref{claim_translators}) converge to $\mathrm{Bowl}(v)^{j+1}_t\times \mathbb{E}^{n-j-1}$, where $\mathrm{Bowl}(v)^{j+1}_t\subset\R^{j+1}\times\langle v\rangle$ translates in direction $-v$ and $\mathbb{E}^{n-j-1}$ is the orthogonal complement of $\langle v\rangle$ in $\mathbb{R}^{n-j}$.        
\end{itemize}
Moreover, our solution is $\alpha$-Andrews noncollapsed for some $\alpha=\alpha(n)>0$ (see Def. \ref{And_Cond}), and it also has the following additional properties:
\begin{enumerate}[label=\emph{\alph*})]
\item\label{add_prop1} it is uniformly $(n-j+1)$-convex: $\liminf_{t\to -\infty}\inf_{M_t}\frac{\kappa_1+\ldots+\kappa_{n-j+1}}{H}>0$,
\item\label{add_prop2} it has unbounded rescaled diameter: $\lim_{t\to -\infty} |t|^{-1/2}\mathrm{diam}(M_t) = \infty$,
\item\label{add_prop3}the curvature decay is type II: $\limsup_{t\rightarrow -\infty} |t|^{1/2} \sup_{M_{t}} \abs{A}=\infty$.
\end{enumerate}
\end{theorem}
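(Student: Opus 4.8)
The plan is to make White's limiting construction \cite{Whi03} quantitative, using the $\alpha$-noncollapsed estimates of Haslhofer--Kleiner \cite{HK}. For each $k$ I would take a smooth compact strictly convex domain $\Omega_k\subset\R^{n+1}$ invariant under $O(j+1)\times O(n-j)$ that is a ``capped cylinder'' modelled on $S^j(1)\times\R^{n-j}$ with elongation $\to\infty$ in the $\R^{n-j}$-directions, chosen so that the $\Omega_k$ are \emph{uniformly} $\alpha$-Andrews (e.g.\ of the form $N_\delta(K_k)$ for a fixed $\delta$, so their interior ball radius is a fixed fraction of $1/H$). By Andrews and Huisken the flows $\{M^k_t\}$ starting from $\partial\Omega_k$ are convex, $O(j+1)\times O(n-j)$-symmetric, $\alpha$-Andrews with $\alpha=\alpha(n)>0$, and become extinct at a round point. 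I would then parabolically rescale and time-translate each $M^k$ so that its eccentricity equals $2$ at time $-1$ in the new coordinates; the main technical point --- the detailed version of White's argument --- is to show, via this recentering together with the monotonicity formula and the fact that the eccentric flows are close to the shrinking cylinder, that the normalized flows are defined on time intervals exhausting a fixed interval $(-\infty,T_\infty)$ with $T_\infty>0$. Given this, the estimates of \cite{HK} provide uniform local curvature bounds, so a subsequence converges smoothly to an ancient flow $\{M_t\}_{t\in(-\infty,T_\infty)}$ that is weakly convex, $\alpha$-Andrews, $O(j+1)\times O(n-j)$-symmetric, and has eccentricity $2$ at $t=-1$, hence is not the round shrinking sphere. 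Since the normalization makes the limit compact (a convex ancient flow compact at one time is compact at all times), the strong maximum principle upgrades weak to strict convexity, and Huisken's theorem gives extinction at a round point at $T_\infty$; translating time yields the flow in the statement.

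For the asymptotic shrinker I would use Huisken's monotonicity formula \cite{Hui2}, which along an ancient flow implies that $\lambda^{-1}M_{\lambda^2 t}$ subconverges as $\lambda\to\infty$ to a self-shrinker $\Sigma$ that is weakly convex, $\alpha$-Andrews, and $O(j+1)\times O(n-j)$-symmetric. By Huisken's classification of convex self-shrinkers, $\Sigma$ is a sphere, a hyperplane, or a round cylinder $S^m(\sqrt{2m})\times\R^{n-m}$. A sphere is impossible, since together with the round extinction it would make the Gaussian density constant and hence (rigidity in the monotonicity formula) the whole flow selfsimilar, contradicting eccentricity $2$; a hyperplane is impossible since the density at $-\infty$ dominates that of the round sphere, which exceeds $1$. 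So $\Sigma=S^m(\sqrt{2m})\times\R^{n-m}$, and $O(j+1)\times O(n-j)$-invariance forces the axis to be $\R^{j+1}$ or $\R^{n-j}$, i.e.\ $m\in\{j,\,n-j-1\}$; the case $m=n-j-1$ is excluded because the flow stays more elongated in the $\R^{n-j}$-directions (a property inherited from the $\Omega_k$), hence is unbounded there after rescaling. Thus $\Sigma=S^j(\sqrt{2j})\times\R^{n-j}$, the limit is subsequence-independent, and rescaling back gives \ref{MT_Asym_Cyl}. Property \ref{add_prop2} is then immediate, since $\lambda^{-1}\mathrm{diam}(M_{-\lambda^2})=\mathrm{diam}(\lambda^{-1}M_{-\lambda^2})\to\mathrm{diam}(S^j(\sqrt{2j})\times\R^{n-j})=\infty$.

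For the tips, note that by symmetry $M_t\cap(\{0\}_{\R^{j+1}}\times\R^{n-j})$ is a round sphere $\{0\}_{\R^{j+1}}\times\mathbb{S}^{n-j-1}_{r(t)}$ of tips. I would first record property \ref{add_prop3}: if $\limsup_{t\to-\infty}\abs{t}^{1/2}\sup_{M_t}\abs{A}$ were finite, then the flow would be type I at $-\infty$, so a blow-up of a tip at the parabolic scale would converge to the tangent shrinker; but by \ref{add_prop2} the tips escape to infinity under this rescaling, so the limit would be the translation-invariant cylinder $S^j\times\R^{n-j}$, which has no tip --- a contradiction. Given type II, fix $v\in\{0\}_{\R^{j+1}}\times\mathbb{S}^{n-j-1}$ and blow up at $p_v(t_0)$ by the factor $H(p_v(t_0),t_0)$ along a sequence $t_0\to-\infty$ realizing the type II rate; these flows are ancient, convex, $\alpha$-Andrews, with $H=1$ at the marked point, on time intervals exhausting $(-\infty,\infty)$, so by \cite{HK} they converge to an eternal convex $\alpha$-Andrews flow attaining its maximal mean curvature, which by Hamilton's rigidity theorem \cite{Ham} is a translating soliton. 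Since the rescaling flattens the $(n-j-1)$-sphere of tips, this soliton is $\Sigma'\times\mathbb{E}^{n-j-1}$ with $\Sigma'\subset\R^{j+1}\times\langle v\rangle$ a convex, $\alpha$-Andrews, $O(j+1)$-rotationally symmetric $(j+1)$-dimensional translator of speed $1$; by the uniqueness of the rotationally symmetric translator \cite{AltWu} this is $\mathrm{Bowl}(v)^{j+1}$, proving \ref{MT_Trans}.

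It remains to prove \ref{add_prop1}: on the cylindrical core $\tfrac{\kappa_1+\dots+\kappa_{n-j+1}}{H}\to\tfrac1j$ by \ref{MT_Asym_Cyl}, near the tips the slices are caps and the ratio is larger, and the convexity and cylindrical estimates of \cite{HK} control the transition region, so combining these gives a uniform positive lower bound as $t\to-\infty$. I expect the main obstacle to be the construction step: choosing the $\Omega_k$ uniformly $\alpha$-Andrews and, above all, showing the rescaled eccentric flows ``linger'' long enough that the normalized limit is genuinely ancient and nonselfsimilar, rather than collapsing to a point or to the shrinking sphere; everything downstream is then a fairly standard combination of Huisken's monotonicity, the Haslhofer--Kleiner estimates, and the classifications of convex shrinkers and translators.
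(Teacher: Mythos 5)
Your construction, rescaling normalization (eccentricity $2$ at $t=-1$), and the asymptotic shrinker argument all follow the paper's line closely, so I'll focus on the two places where the proposal diverges or leaves a gap.

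The main gap is in the tip blow-down. You assert that the eternal limit flow ``attains its maximal mean curvature,'' so that Hamilton's Harnack rigidity applies. But the point picking that is actually available --- maximizing $\abs{t}^{1/2}H(p_t,t)$ \emph{along the tip trajectory only} over $t\in[-k,-1]$, which is the only option since we do not know a priori that the curvature at the tip dominates --- does not produce a spacetime maximum in the limit. What it does give, as in the paper's estimate \eqref{Lower_Curv_After}, is that for forward rescaled times the tip curvature satisfies $\hat H(0,\hat t)\le (1-\hat t/(\lambda_k^2\abs{t_k}))^{-1/2}\to 1$. The paper then argues: $\partial_t H\ge 0$ everywhere by Harnack, so $H\equiv 1$ at the tip for $\hat t\ge 0$, hence $\partial_t H=0$ there, and $\nabla H=0$ at the tip by $O_{j+1}$-symmetry; thus the tip is only a \emph{critical point} of $H$, and one needs the version of Hamilton's Harnack rigidity for critical values (Theorem \ref{harn_equ} with the remark following it) rather than the version for maxima. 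Your argument either needs to justify why the global max is attained at the tip (not known here and explicitly flagged in the paper's Remark after Claim \ref{claim_translators}), or be rewritten in terms of a critical point as the paper does.

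Secondarily, you correctly identify the ``lingering'' statement ($T_\ell\to-\infty$) as the key construction step, but only sketch an approach via the monotonicity formula and cylindrical closeness. The paper's proof (Claim following \eqref{claim_diam}) is actually much more elementary and you should aim for something like it: using convexity and $a(t_0)/b(t_0)\ge 2$ one inserts a ball of radius $b(t_0)/4$ at distance $a(t_0)/2$ from the origin as an interior barrier, so $a$ takes time at least $b(t_0)^2/32n\ge \delta^2/32n$ (with $\delta$ from the diameter bound at $t=-1$) to halve; since the initial eccentricity $\to\infty$, the normalized initial time $T_\ell\to-\infty$. Finally, for property \ref{add_prop1} you propose a region-by-region argument, but this is unnecessary: uniform $(n-j+1)$-convexity is imposed on the initial caps, preserved under the flow by the maximum principle, and passes to the limit; and for \ref{add_prop3} note it follows from \ref{add_prop2} by integrating the speed bound, rather than by a separate tangent-shrinker contradiction.
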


\begin{remark}
Note that \ref{add_prop3} follows from \ref{add_prop2} by integration.
\end{remark}

\begin{remark}
For $j=n-1$ ($n\geq 2$) our solutions are analogous to Perelman's example for three-dimensional Ricci flow \cite[Ex. 1.4]{Per2}.
\end{remark}

In physical terms, our solutions can be thought of as phase transitions between the sphere in the infrared and the cylinders in the ultraviolet. Our proof is based on the recent estimates of Haslhofer-Kleiner \cite{HK}. These estimates have been developed in the context of mean convex (i.e. $H\geq 0$) mean curvature flows satisfying the conclusion of Andrews' beautiful noncollapsing result \cite{And}. Let us now recall the definition:

\begin{definition}[Andrews condition {\cite[Def. 1.1]{HK}}]\label{And_Cond}
Let $\alpha>0$. A mean convex mean curvature flow $\{M_t\}$ is called $\alpha$-Andrews if for every $p\in M_t$ there are interior and exteriors balls tangent  at $p$ of radius at least $\frac{\alpha}{H(p)}$.
\end{definition}

\begin{remark}
There is a more general notion for weak solutions, but by \cite[Thm. 1.14]{HK} ancient $\alpha$-Andrews flows are automatically smooth until they become extinct. Also recall that, by the maximum principle, mean convexity and the Andrews condition are both preserved under mean curvature flow.
\end{remark}

For comparison, we also prove the following theorem which characterizes the round shrinking sphere in the class of ancient $\alpha$-Andrews flows.

\begin{theorem}[Characterization of the sphere]\label{Comp_Thm}
Let $\{M_t\subset \mathbb{R}^{n+1}\}_{t\in (-\infty,0)}$ be an ancient $\alpha$-Andrews flow and assume \emph{at least one} of the following conditions is satisfied:
\begin{enumerate}[label=\emph{\alph*}) ,ref=\ref{Comp_Thm}.\alph*]
\item\label{CT_No_Un_convex} it is uniformly convex: $\liminf_{t\to -\infty}\inf_{M_t}\frac{\kappa_1}{H}>0$,
\item\label{CT_Diam} it has bounded rescaled diameter: $\limsup_{t\to -\infty} |t|^{-1/2}\mathrm{diam}(M_t) < \infty$,
\item\label{CT_Curvature} the time slices are compact and the curvature decay is type I:\\
$\limsup_{t\to -\infty} |t|^{1/2}\sup_{ M_t}\abs{A}< \infty$.
\end{enumerate}
Then $\{M_t\}$ is a family of round shrinking spheres.
 \end{theorem}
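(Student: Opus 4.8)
The plan is to show that under any one of the three hypotheses the tangent flow at $-\infty$ is a round shrinking sphere, and then to upgrade this to a rigidity statement for the flow itself. For the setup, recall first that an ancient $\alpha$-Andrews flow is convex (by the convexity estimates of \cite{HS3}, \cite{HK}); also, a static hyperplane satisfies none of a), b), c), so we may assume $\{M_t\}$ is not one, and then $H>0$ throughout by the strong maximum principle applied to $\partial_t H=\Delta H+\abs{A}^2H$. By Huisken's monotonicity formula \cite{Hui2}, the parabolic rescalings $\lambda^{-1}M_{\lambda^{2}t}$ subconverge as $\lambda\to\infty$ to a selfsimilarly shrinking limit flow, and since the $\alpha$-Andrews condition yields uniform local curvature and area bounds at all scales via the estimates of \cite{HK}, this limit is smooth and of multiplicity one. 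Hence, by Huisken's classification of mean convex selfshrinkers \cite{Hui2}, it equals $S^{k}(\sqrt{2k\abs{t}})\times\R^{n-k}$ for some $0\le k\le n$; a multiplicity-one flat limit would force the whole flow to be a hyperplane by the equality case of \cite{Hui2}, which we excluded, so in fact $1\le k\le n$. It remains to rule out $k\le n-1$ under each hypothesis, as then the blowdown is a round shrinking sphere.

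For a), uniform convexity is scale invariant and therefore passes to the blowdown: if $\inf_{M_t}\kappa_1/H\ge\delta>0$ for all $t\le T$, the same bound holds on $\lambda^{-1}M_{\lambda^{2}t}$ as soon as $\lambda^{2}t\le T$, hence on the limit; but $S^{k}(\sqrt{2k\abs{t}})\times\R^{n-k}$ with $k\le n-1$ has $\kappa_1=0$, a contradiction, so $k=n$. For b), similarly $\mathrm{diam}(\lambda^{-1}M_{\lambda^{2}t})=\lambda^{-1}\mathrm{diam}(M_{\lambda^{2}t})$ stays bounded as $\lambda\to\infty$ under the hypothesis, so the blowdown has compact time slices, and among the cylinders only $k=n$ does. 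For c), I reduce to b): since $M_t$ is compact, $\mathrm{diam}(M_t)=\abs{p(t)-q(t)}$ for points $p(t),q(t)\in M_t$ moving by mean curvature, so
\[
\tfrac{d}{dt}\mathrm{diam}(M_t)\ge -\abs{\dot p}-\abs{\dot q}\ge -2\sup_{M_t}H\ge -2\sqrt{n}\,\sup_{M_t}\abs{A}
\]
(in the sense of forward difference quotients); if $\sup_{M_t}\abs{A}\le C_0\abs{t}^{-1/2}$ for $t\le -1$, integrating from $t$ to $-1$ gives $\mathrm{diam}(M_t)\le \mathrm{diam}(M_{-1})+4\sqrt{n}\,C_0\abs{t}^{1/2}$, i.e.\ bounded rescaled diameter. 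This is precisely the converse of the integration observation in the remark following Theorem \ref{Main_Thm}, and now b) applies.

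Finally, suppose the blowdown is the round shrinking sphere $S^{n}(\sqrt{2n\abs{t}})$. Then $M_t$ has compact time slices (a noncompact ancient $\alpha$-Andrews flow would have a noncompact blowdown, since a noncompact convex body contains a ray), and the smooth convergence $\lambda^{-1}M_{\lambda^{2}t}\to S^{n}(\sqrt{2n\abs{t}})$ gives $\lim_{t\to-\infty}\max_{M_t}\abs{A}^{2}/H^{2}=1/n$. By Huisken's evolution equation, $\abs{A}^{2}/H^{2}$ is a subsolution of a linear parabolic equation along any mean convex mean curvature flow, so $g(t):=\max_{M_t}\abs{A}^{2}/H^{2}$ is nonincreasing; being $\ge 1/n$ pointwise and tending to $1/n$ as $t\to-\infty$, it is identically $1/n$. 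Thus each $M_t$ is totally umbilic, hence a round sphere, so $\{M_t\}$ is a family of round shrinking spheres.

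The main obstacle is the first step, namely that the blowdown of an ancient $\alpha$-Andrews flow is a smooth, multiplicity-one generalized cylinder: this is where the noncollapsing estimates of \cite{HK} are essential (to exclude higher multiplicity and to extract a smooth limit), combined with Huisken's monotonicity and classification. Once this structural fact is in hand, the three cases and the concluding rigidity argument are elementary applications of scale invariance and the maximum principle.
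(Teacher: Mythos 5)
Your proposal is correct and reaches the same intermediate checkpoint as the paper (the blowdown at $-\infty$ is a round cylinder or sphere, and each of a)--c) rules out the cylinders by scale invariance), but the concluding rigidity step is genuinely different from the paper's. The paper invokes the equality case of Huisken's monotonicity formula: once the tangent flow at $-\infty$ is the round sphere, the Gaussian density is pinched between the sphere values at $r\to\infty$ and at the extinction time, so it is constant in $r$ and the flow is self-similar, hence a shrinking sphere. You instead argue via the scale-invariant pinching quantity $f=\abs{A}^2/H^2$: by Huisken's evolution inequality it is a subsolution, so $\max_{M_t}f$ is nonincreasing on compact, mean-convex time slices; combined with $f\ge 1/n$ pointwise and $\lim_{t\to-\infty}\max_{M_t} f=1/n$ from the smooth convergence to the sphere, this forces $f\equiv 1/n$, i.e.\ every slice is umbilic, hence round. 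Both routes are valid. The monotonicity-formula route is a one-line deduction once the blowdown is identified and does not need the compactness discussion; your pinching route is more elementary (just the maximum principle) but requires you to first argue compactness of the time slices, which you correctly extract from the blowdown being compact, and it requires $H>0$ everywhere, which you correctly obtain from the strong maximum principle after excluding the static hyperplane. Two small inaccuracies worth noting: the classification of the blowdown is more cleanly cited from the structure theorem for ancient $\alpha$-Andrews flows in \cite{HK} (Theorem~\ref{structure_thm} in the paper) rather than from Huisken's classification of mean convex shrinkers, since the latter needs a priori bounded geometry which here is supplied precisely by the Andrews condition; and your reduction of c) to b) by integrating $\tfrac{d}{dt}\mathrm{diam}$ is exactly the observation the paper invokes parenthetically, so that part matches.
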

 
 \begin{remark}
A related result for closed, ancient, convex solutions of the mean curvature flow has been announced recently by Huisken-Sinestrari \cite{HS_OWR}. Our proof based on the Andrews condition seems to be much shorter, though.
\end{remark}

Theorem \ref{Comp_Thm} shows that the additional properties \ref{add_prop1} -- \ref{add_prop3} in Theorem \ref{Main_Thm} are in a sense sharp. Namely, if any of them is strengthened slightly, then this forces the solution to be a family of round shrinking spheres.\\

\noindent\emph{Acknowledgements:} We thank Bruce Kleiner and Brian White for useful comments.

\section{Preliminaries}\label{preliminaries}

Three key ingredients in our proofs are the convexity estimate, the global convergence theorem and the structure theorem established in \cite{HK}. As usual, we use the notation $P(p,t,r)=B(p,r)\times (t-r^2,t]$ for the parabolic ball centered at $(p,t)\in\R^{n+1}\times \R$, of size $r>0$.

\begin{theorem}[Convexity estimate {\cite[Thm. 1.10]{HK}}]\label{thm_conv}
For all $\eps>0$, $\al >0$, there exists $\eta=\eta(\eps,\alpha)<\infty$ with the following property.
If $M_t$ is an $\al $-Andrews flow in a parabolic ball  $P(p,t,\eta\, r)$ centered at a point 
$p\in M_t$ with $H(p,t)\leq r^{-1}$, 
then
\begin{equation}
\kappa_1(p,t)\geq -\eps r^{-1}.
\end{equation}
\end{theorem}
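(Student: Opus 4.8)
\emph{Proof plan.} The estimate is scale-invariant, so I would normalize $r=1$ and argue by contradiction: if no finite $\eta=\eta(\eps,\al)$ works, then for each $i$ there is (after a space-time translation) an $\al$-Andrews flow $\{M^i_t\}$ defined in $P(0,0,i)$ with $0\in M^i_0$, $H(0,0)\leq 1$, but $\kappa_1(0,0)\leq-\eps$. By the local curvature estimate and the global convergence theorem of \cite{HK}, a subsequence converges smoothly to a smooth \emph{ancient} $\al$-Andrews flow $\{M^\infty_t\}_{t\in(-\infty,0]}$ with $0\in M^\infty_0$, $H(0,0)\leq 1$ and $\kappa_1(0,0)\leq-\eps<0$. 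Hence it suffices to prove that every ancient $\al$-Andrews flow is weakly convex, $\kappa_1\geq 0$; this contradicts the existence of such an $M^\infty$.

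To prove this, set $w:=\kappa_1/H$ (if $H\equiv 0$ the flow is a static hyperplane and there is nothing to do; otherwise $H>0$ by the strong maximum principle applied to $\D_tH=\Lap H+\abs{A}^2H$). Combining that equation with the barrier inequality $\D_t\kappa_1\geq\Lap\kappa_1+\abs{A}^2\kappa_1$ for the lowest eigenvalue of the Weingarten operator, a direct computation shows that $w$ is a supersolution of a drift heat equation
\begin{equation}
\D_tw\ \geq\ \Lap w+\tfrac{2}{H}\langle\nabla H,\nabla w\rangle
\end{equation}
with \emph{no zeroth order term}. The Andrews condition forces every principal curvature to obey $\kappa_i\leq H/\al$, so $w\geq 1-\tfrac{n-1}{\al}$ is bounded below, while trivially $w\leq\tfrac1n$. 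Thus $m:=\inf w$ over the ancient flow is a finite number, and on $M^\infty$ it satisfies $m\leq w(0,0)<0$.

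Next I would perform a second blow-up: choose $(p_i,t_i)$ with $w(p_i,t_i)\to m$, parabolically rescale $\{M^\infty_t\}$ about $(p_i,t_i)$ so that $H(p_i,t_i)=1$, and pass (via \cite{HK}) to a limit $\{N_t\}_{t\in(-\infty,0]}$ --- again an ancient $\al$-Andrews flow --- on which $\kappa_1/H\geq m$ everywhere with equality at a point of $N_0$. Since $\kappa_1/H$ is a supersolution of the drift heat equation above, having no zeroth order term, the strong maximum principle forces $\kappa_1\equiv mH$ on all of $N$, with $m<0$. The Weingarten operator $\mathcal{W}$ of $N$ evolves by $\D_t\mathcal{W}=\Lap\mathcal{W}+\abs{A}^2\mathcal{W}$, so $T:=\mathcal{W}-mH\cdot\mathrm{Id}$ is a nonnegative endomorphism field with nontrivial kernel at every point satisfying $\D_tT=\Lap T+\abs{A}^2T$; Hamilton's strong maximum principle for tensors then makes $\ker T$ a parallel distribution on each slice $N_t$. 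A de Rham/Moore splitting realizes $N_t$, locally inside $\R^{n+1}$, as a Riemannian product of a Euclidean factor with either a flat piece, a round sphere, or a plane curve, and in each of these cases the smallest principal curvature is either $0$ or a \emph{positive} multiple of $H$ --- contradicting $\kappa_1\equiv mH$ with $m<0$, $H>0$. This completes the argument.

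The step I expect to be the main obstacle is the very first blow-up: extracting a bona fide smooth \emph{ancient} limit $\{M^\infty_t\}$ that still remembers $\kappa_1(0,0)<0$. For a merely mean convex flow this may fail (the blow-up could converge with multiplicity, or to something lower-dimensional), and ruling this out is exactly what the Andrews/noncollapsing hypothesis buys us, via the local curvature estimate and the global convergence theorem of \cite{HK}. The same controlled-geometry input is also what legitimizes the two maximum principle arguments on the a priori noncompact limit flows $M^\infty$ and $N$.
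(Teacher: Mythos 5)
This theorem is quoted from \cite[Thm.~1.10]{HK}; the paper you are reading does not reprove it, so there is no ``paper's own proof'' here to compare against line by line. That said, your proposal does follow the broad contour of the Haslhofer--Kleiner argument: a two-step blow-up reducing to ancient $\alpha$-Andrews flows (made possible by their local curvature estimate and global convergence theorem), scale-invariance of $\kappa_1/H$, a lower bound on $\kappa_1/H$ coming from the interior tangent ball, and a strong maximum principle at a flow where the infimum is attained. The drift heat inequality for $w=\kappa_1/H$ you write down is correct, as is the reduction: once one knows that every smooth ancient $\alpha$-Andrews flow is weakly convex, the compactness machinery closes the argument.

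The one place where your write-up is shaky --- though I believe it can be repaired, so I would call it an imprecision rather than a fatal gap --- is the very last step. After obtaining an ancient flow $N$ with $\kappa_1\equiv mH$, $m<0$, $H>0$, and applying Hamilton's tensor maximum principle to $T=\mathcal{W}-mH\cdot\mathrm{Id}\geq 0$ to conclude that $\ker T$ is parallel, you invoke a ``de Rham/Moore splitting'' and then assert that in each factor ``the smallest principal curvature is either $0$ or a positive multiple of $H$.'' This is not the right formulation of the contradiction and obscures the actual mechanism. What de Rham's local decomposition theorem gives from a parallel distribution is that $N_t$ is \emph{locally an intrinsic Riemannian product} with $\ker T$ tangent to one factor; in particular all mixed sectional curvatures vanish. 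But the Gauss equation computes, for unit $v\in\ker T$ and unit $w\in(\ker T)^\perp$ with principal curvature $\kappa_w$,
\begin{equation}
R(v,w,v,w)=h(v,v)\,h(w,w)-h(v,w)^2=mH\,\kappa_w,
\end{equation}
and since $\sum_{i>1}\kappa_i=(1-m)H>0$ there is some $\kappa_w>0$, so $mH\,\kappa_w<0$. That is the contradiction --- no appeal to an extrinsic Moore product, to a ``Euclidean factor,'' or to a case analysis of spheres and plane curves is needed (indeed, as stated, the intrinsic product already fails before Moore's lemma could even be invoked). I would rewrite the last paragraph to say exactly this. One further thing worth spelling out when you write $\D_t\kappa_1\geq\Lap\kappa_1+\abs{A}^2\kappa_1$: at points where $\kappa_1$ has multiplicity it is only Lipschitz, so this and the subsequent inequality for $w$ hold in the viscosity/support sense; this is standard but should be said, since the strong maximum principle you apply later needs it.

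Two smaller checks that you should make explicit: (i) in the second blow-up, the rescaled flows are again defined on $(-\infty,0]$ (since $t_i<0$ and you rescale about $(p_i,t_i)$), so the limit $N$ really is ancient; and (ii) $H>0$ on $N$, not merely $H\geq 0$, which you need for $w$ and for the Gauss computation --- this follows from the strong maximum principle for $H$ together with $H(0,0)=1$.
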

 
\begin{theorem}[Global convergence {\cite[Thm. 1.12]{HK}}]\label{Glob_Conv}
Let $M_t^k$ be a sequence of $\al $-Andrews flows with $\sup_k H(0,0)<\infty$ that is defined in parabolic balls $P(0,0,r_k)$ centered at $0\in M_0^k$ with $r_k\to\infty$. Then there exists a smoothly convergent subsequence, $M_t^{{k_\ell}}\to M_t^\infty$ in $C^\infty_\textrm{loc}$ on $\R^{n+1}\times (-\infty,0]$.
\end{theorem}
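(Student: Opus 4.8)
The plan is a compactness argument resting on the local a priori estimates for $\alpha$-Andrews flows from \cite{HK}.

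I would first set up the reduction. Since mean convexity and the Andrews condition are closed conditions, and since the local curvature estimate of \cite{HK} turns a mean curvature bound in a parabolic ball of an $\alpha$-Andrews flow into uniform bounds on $A$ and all of its derivatives in a smaller one, it suffices to establish the following uniform curvature bound: for every $R,\tau>0$ there is a constant $C_{R,\tau}<\infty$ such that $\abs{A}(p,s)\le C_{R,\tau}$ whenever $p\in M^k_s$, $\abs{p}\le R$, $s\in[-\tau,0]$, and $k$ is large enough. Granted this, the local estimate yields uniform $C^\infty$ bounds on such regions, and Arzela--Ascoli together with a diagonal argument over an exhaustion of $\R^{n+1}\times(-\infty,0]$ (using $0\in M^k_0$ to pin the flows down) produces a subsequence converging in $C^\infty_{\mathrm{loc}}$ to an $\alpha$-Andrews flow $M^\infty_t$.

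It remains to prove the uniform curvature bound, and I would do this by contradiction via point-selection and blow-up. If it failed, then after passing to a subsequence there would be points $(p_k,s_k)$ with $p_k\in M^k_{s_k}$, $\abs{p_k}\le R$, $s_k\in[-\tau,0]$, and $H(p_k,s_k)\to\infty$ (one may pass freely between $\abs{A}$ and $H$ here, since $\abs{A}\le c(n)H$ wherever the convexity estimate applies, which it does for large $k$ because $r_k\to\infty$). By Hamilton's point-selection these can be replaced by points $(\bar p_k,\bar s_k)$, still lying in a bounded region of space-time, with $\bar H_k:=H(\bar p_k,\bar s_k)\to\infty$ and $H\le 2\bar H_k$ on $P(\bar p_k,\bar s_k,L_k\bar H_k^{-1})$ for some $L_k\to\infty$. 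Rescaling parabolically about $(\bar p_k,\bar s_k)$ by $\bar H_k$ and using $r_k\to\infty$, the rescaled flows are $\alpha$-Andrews on parabolic balls whose radii tend to infinity; by the local curvature estimate a subsequence converges in $C^\infty_{\mathrm{loc}}$ to a non-flat ancient $\alpha$-Andrews flow, which the convexity estimate (applied pointwise at scale $\sim 1$) forces to be convex.

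The main obstacle is to rule this blow-up out, i.e.\ to see that it cannot occur inside a bounded space-time region while the flow simultaneously carries the controlled point $0\in M^k_0$ with $H(0,0)$ bounded; I expect essentially all of the work to lie here. The strategy I would pursue is to use the Andrews condition to argue that a region where $H\sim\bar H_k$ is a genuine, uncollapsed geometric feature of size comparable to $\bar H_k^{-1}$ (a small neck or cap), and then to compare it against the controlled region near the origin --- via the avoidance principle applied to the inscribed and circumscribed balls, and via Huisken's monotonicity formula --- to conclude that any such feature must sit at distance tending to infinity from the origin. This forces $\bar p_k\notin B(0,R)$ for $k$ large, contradicting the choice of $(\bar p_k,\bar s_k)$, and hence yields the uniform curvature bound; by the first step this completes the proof.
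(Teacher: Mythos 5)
The paper does not prove this statement---it is quoted verbatim as Theorem 1.12 of \cite{HK} and used as a black box---so the comparison has to be against the proof in \cite{HK}.

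Your first step is exactly right, and it matches how \cite{HK} organizes the argument: they isolate the uniform curvature bound you want (for every $R,\tau$ a $C_{R,\tau}$ with $|A|\le C_{R,\tau}$ on $B(0,R)\times[-\tau,0]$ once $r_k$ is large) as a separate result, the \emph{global curvature estimate} (their Thm.~1.11). From that estimate, local parabolic regularity gives uniform $C^\infty$ control, and Arzel\`a--Ascoli plus a diagonal argument over an exhaustion, with the flows pinned by $0\in M^k_0$, produces the subsequential $C^\infty_{\mathrm{loc}}$ limit; this is essentially a one-paragraph deduction, as you say.

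The genuine gap is in the crux, which you correctly flag: you reduce to ruling out a blow-up at bounded distance from a controlled point, but the mechanism you then gesture at is not a proof and is not the one that works. Saying that the Andrews condition makes a high-curvature region a ``genuine, uncollapsed feature'' and that comparing against the origin ``via the avoidance principle \ldots\ and Huisken's monotonicity formula'' forces it to infinity is a restatement of the desired conclusion, not an argument for it: the avoidance principle applied to inscribed and circumscribed balls gives you scale information at each point separately, but nothing a priori stops a small bowl-like cap from sitting at distance $\sim 1$ from a point where $H\sim 1$ in a single time-slice, so a genuine propagation-of-control argument is required. What \cite{HK} actually prove and use is a \emph{halfspace convergence} lemma---a rigidity statement saying that if $\alpha$-Andrews flows in parabolic balls of radii $\to\infty$ have $H(0,0)\to 0$, then the flows converge locally to a static halfspace---together with a careful contradiction/iteration scheme that first establishes the local curvature estimate (control on a parabolic ball of a \emph{fixed} relative size) and then bootstraps it to the global one. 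Huisken's monotonicity formula plays no role in this: it is an entropy-monotonicity statement used for tangent-flow classification, not for local a priori curvature or distance estimates, and it is not clear what it would contribute here. Your instinct that the inscribed/circumscribed balls and the avoidance principle are the relevant geometric input is correct---they are precisely the ingredients behind halfspace convergence---but the step from there to ``the high-curvature point must move off to infinity'' is the entire content of \cite[Thm.~1.11]{HK} and is left unproved in your sketch.
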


\begin{theorem}[Structure theorem {\cite[Part of Thm. 1.14]{HK}}]\label{structure_thm}
Ancient $\alpha$-Andrews flows in $\R^{n+1}$ are smooth until they become extinct. The only selfsimilarly shrinking ones are the sphere, the cylinders and the plane.
\end{theorem}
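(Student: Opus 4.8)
The plan is to establish the two assertions — smoothness until extinction and the identification of the self-similar shrinkers — with a common first step, namely that \emph{every ancient $\alpha$-Andrews flow has convex time slices.} To see this, let $(p,t)$ be a regular point of the flow with $H(p,t)>0$ and put $r=H(p,t)^{-1}$. Since the flow is ancient it is defined on $(-\infty,T)\supseteq(t-\eta^2r^2,t]$ and is $\alpha$-Andrews there, hence $\alpha$-Andrews in the parabolic ball $P(p,t,\eta r)$ for \emph{every} $\eta$; applying Theorem~\ref{thm_conv} with this $r$ and $\eta=\eta(\eps,\alpha)$ gives $\kappa_1(p,t)\ge-\eps H(p,t)$ for all $\eps>0$, so $\kappa_1(p,t)\ge 0$. (If $H$ vanishes somewhere, the strong maximum principle forces $H\equiv0$, and then the Andrews balls have arbitrarily large radius, so each $M_t$ is a hyperplane — convex, and equal to the plane shrinker.) Since convexity is closed under the convergence defining tangent flows, tangent flows of an ancient $\alpha$-Andrews flow are convex as well.

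For \emph{smoothness until extinction}, suppose the flow had a singular time $t_\ast<T$. By Huisken's monotonicity formula, a tangent flow at a singular point is a selfsimilar shrinker; it is mean convex, convex by the previous paragraph, and has second fundamental form bounded on compact sets, since the exterior Andrews ball of radius $\alpha/H$ forces $\kappa_n\le H/\alpha$ and hence $\abs{A}\le\sqrt n\,H/\alpha$, while the shrinker relation $H=\tfrac12\langle x,\nu\rangle$ bounds $H$ on bounded regions. By the classification carried out in the last step below, this tangent shrinker is a plane, a sphere, or a cylinder. A plane forces Gaussian density one at $(p,t_\ast)$, so White's local regularity theorem makes the flow smooth there — contradiction. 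A sphere means a whole connected component collapses at $t_\ast$, so $t_\ast$ is the extinction time of that component, contradicting $t_\ast<T$. A cylinder is impossible because a convex mean curvature flow cannot neckpinch before extinction: in the compact case a convex hypersurface shrinks to a round point by Huisken's theorem, and in general the convexity just proved, together with the Andrews condition and the local curvature estimates of \cite{HK}, rules out interior cylindrical singularities. Hence the flow is smooth on all of $(-\infty,T)$.

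It remains to classify the \emph{self-similar shrinkers}. Let $\{M_t=\sqrt{-t}\,\Sigma\}$ be a selfsimilarly shrinking $\alpha$-Andrews flow with shrinker $\Sigma=M_{-1}$. Being ancient, it is convex by the first step, and, exactly as above, $\abs{A}\le\sqrt n\,H/\alpha$ with $H$ bounded on compacta via the shrinker relation; moreover a shrinker has Euclidean volume growth. Thus $\Sigma$ is a complete embedded self-shrinker with $H\ge0$, locally bounded $\abs{A}$, and polynomial volume growth, and Huisken's classification theorem identifies it with $S^k(\sqrt{2k})\times\R^{n-k}$ for some $0\le k\le n$, i.e.\ the round sphere ($k=n$), a round cylinder ($1\le k\le n-1$), or the plane ($k=0$). (Alternatively, avoiding the full classification: where $\kappa_1$ vanishes, Hamilton's strong maximum principle splits off an $\R$-factor, reducing to a strictly convex complete self-shrinker; the shrinker relation excludes noncompact strictly convex examples such as paraboloids, so this factor is compact, hence the round sphere by Huisken's pinching argument.)

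The main obstacle is this last step: extracting the curvature bound on shrinkers cleanly from the Andrews condition and then either importing Huisken's positive-curvature classification or running the splitting-and-compactness argument, together with the genuinely delicate exclusion of cylindrical singularities in the noncompact case, where one must use the local estimates of \cite{HK} rather than merely Huisken's compact convergence theorem.
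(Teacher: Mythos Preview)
This theorem is quoted in the paper as a preliminary result from \cite{HK}; the paper itself gives no proof. So there is no ``paper's own proof'' to compare against --- you have supplied an argument where the authors simply cite one.

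That said, two parts of your sketch are sound and in fact match how the paper uses these tools elsewhere. Your convexity step is exactly the argument the paper employs at the start of the proof of Theorem~\ref{Comp_Thm}: ancientness lets you apply Theorem~\ref{thm_conv} with arbitrarily large $\eta$, forcing $\kappa_1\ge 0$. Your shrinker classification is also on solid ground: the exterior Andrews ball indeed gives $\kappa_n\le H/\alpha$, the shrinker equation bounds $H$ locally, and then Huisken's mean-convex shrinker theorem (in its noncompact form with polynomial volume growth) finishes it.

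The genuine gap is the smoothness assertion, and you correctly flag it yourself. Your argument is: take a tangent flow at a hypothetical singular point, classify it, and rule out each case. Plane and sphere are handled cleanly, but for the cylinder you write that ``a convex mean curvature flow cannot neckpinch before extinction'' and then defer to ``the local estimates of \cite{HK}''. This is the whole difficulty. Convexity alone does \emph{not} obviously exclude a cylindrical tangent flow in the noncompact or weak setting --- a shrinking solid cylinder is itself convex --- and Huisken's compact theorem does not apply. What one actually needs from \cite{HK} is the halfspace/curvature estimate package that upgrades an $\alpha$-Andrews weak flow to a smooth one by bounding $H$ (and hence $\abs{A}$) on the regular part and showing the singular set is empty prior to extinction; this is a substantial argument, not a consequence of the convexity you have established. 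Since your proof of smoothness ultimately leans on unspecified estimates from the very reference in which the theorem is proved, that part of the proposal is circular rather than independent.
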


Let us also recall the rigidity case of Hamilton's Harnack inequality. Since we don't know a-priori that the limits obtained using Theorem \ref{Glob_Conv} have bounded curvature, this requires some minor adjustments.

\begin{theorem}[Rigidity of Hamilton's Harnack inquality {\cite[Thm. B]{Ham}}]\label{harn_equ}
Let $\{M_t\subset \R^{n+1}\}_{t\in(-\infty,\infty)}$ be a convex eternal mean curvature flow that satisfies Hamilton's Harnack inequality. Assume that the mean curvature attains a critical value at a point in space-time. Then $M_t$ is a translating soliton.
\end{theorem}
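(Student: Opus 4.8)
The plan is to follow Hamilton's treatment of the equality case of his Harnack estimate \cite{Ham}, the only modification being that the strong maximum principle — which in \cite{Ham} is invoked in a setting with bounded curvature — is used here purely locally in space-time, which suffices since our eternal flow is smooth on every compact subset of space-time (indeed it is smooth until extinction, but here it never becomes extinct).

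First I would reduce to the strictly convex case. If the second fundamental form $A$ has a vanishing eigenvalue at some point of space-time, then by Hamilton's strong maximum principle for the evolution of $A$ under mean curvature flow the flow splits off a Euclidean factor, and one passes to the lower-dimensional convex eternal factor, which still satisfies Hamilton's Harnack inequality and on which $H$ still attains a critical value (if the flow degenerates to a hyperplane then $H\equiv 0$ and $\{M_t\}$ is trivially a translating soliton). So we may assume $A>0$ everywhere.

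Let $(p_0,t_0)$ be a point where $H$ is critical, i.e. $\nabla H=0$ and $\partial_t H=0$ (with $\partial_t$ taken along the flow). Since $A>0$, we may optimize the vectorial Harnack inequality $\partial_t H+2\langle\nabla H, V\rangle + A(V,V)\geq 0$ over the tangent vector $V$; the minimizer is $V_{\mathrm{opt}}=-A^{-1}\nabla H$, and we obtain the trace form $Z:=\partial_t H-\langle\nabla H, A^{-1}\nabla H\rangle\geq 0$ everywhere in space-time. Both terms of $Z$ vanish at $(p_0,t_0)$, so $Z(p_0,t_0)=0$. The computation underlying the proof of the Harnack estimate shows that $Z$ (more precisely, the natural quantity on the bundle of which $Z$ is the optimized trace) satisfies a parabolic evolution inequality with a manifestly nonnegative reaction term; applying the strong maximum principle on a small backward parabolic neighborhood of $(p_0,t_0)$ — legitimate because the flow is smooth there — gives $Z\equiv 0$ near $(p_0,t_0)$, and a connectedness argument together with forward uniqueness of the flow then upgrades this to $Z\equiv 0$ on all of space-time. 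In particular equality holds in the vectorial Harnack inequality with $V=V_{\mathrm{opt}}$ at every point.

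Finally, I would unwind the equality case as in \cite{Ham}: equality in the vectorial Harnack forces every intermediate inequality in its proof to be an equality, which identifies $V_{\mathrm{opt}}$ with the tangential part of a fixed ambient vector $w$ and shows that $\{M_t\}$ moves by translation in the direction of $w$; hence $\{M_t\}$ is a translating soliton. I expect the only genuinely new point, relative to \cite{Ham}, to be the step producing $Z\equiv 0$: we do not assume a uniform curvature bound and do not need one, since the vanishing of $Z$ at the single point $(p_0,t_0)$ is propagated by a \emph{local} maximum principle, and spreading it out over space-time is a soft connectedness and unique-continuation argument rather than a PDE estimate.
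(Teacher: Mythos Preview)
Your proposal is correct and follows exactly the route the paper indicates: the paper does not give a self-contained proof but only a remark (immediately after the statement) that Hamilton's original argument for the equality case goes through verbatim with ``maximum'' replaced by ``critical value'', and that the strict maximum principle is local and hence requires no global curvature bound. Your sketch fleshes out precisely these two observations, so there is nothing to add.
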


\begin{remark}
Hamilton assumes that the mean curvature attains a maximum. However, his discussion of the equality case of the maximum principle goes through verbatim for critical values. Also, note that the strict maximum principle is local, and thus does not require curvature bounds.
\end{remark}

\section{Proof of Theorem \ref{Main_Thm}}

Fix $1\leq j\leq n-1$. Although this is not strictly necessary, it is convenient to construct the solution in a $\mathrm{O}_{j+1}\times \mathrm{O}_{n-j}$ symmetric way.\\

For every $\ell\in \mathbb{N}$ we construct a hypersurface $M^\ell$ as follows. We take the cylinder $S^j\times [-\ell,\ell]^{n-j}$ of length $2\ell$ and cap it off in a rotationally symmetric, strictly convex way, say at a scale of length one. We choose the cap off function independently of $\ell$, and thus observe:
\begin{itemize}
\item The hypersurfaces $M^\ell$ are uniformly $n-j+1$ convex, i.e. $\kappa_1+\ldots+\kappa_{n-j+1}\geq \beta H$ for some $\beta=\beta(n)>0$ uniformly for all $\ell$.  
\item There exists an $\alpha=\alpha(n)>0$ such that $C^\ell$ is $\alpha$-Andrews for all $\ell$.
\end{itemize}

Let $M^\ell_t$ be the mean curvature flow starting at $M^\ell$ at $t=0$. By Huisken's classical theorem \cite{Hui1} (see also Andrews \cite[Rmk. 6]{And}) the flow collapses to a round point in finite time.
Using spheres as interior barriers and cylinders as exterior barriers, we see that this time is comparable to one.\\

Now, let $\{\hat{M}^\ell_t\}_{t\in[T_\ell,0)}$ (where $T_\ell< -1$ denotes the new initial time) be the sequence of solutions obtained by parabolically rescaling $M^\ell_t$ and shifting the time parameter such that:
\begin{itemize}
\item The flow becomes extinct at $t=0$.
\item The ratio of the major radius $a(t)=max_{x\in \hat{M}^\ell_t}(\sum_{i=j+2}^{n+1}x_{i}^{2})^{1/2}$  and the minor radius $b(t)=max_{x\in \hat{M}^\ell_t}(\sum_{i=1}^{j+1}x_{i}^{2})^{1/2}$ equals $2$ for the \emph{first} time.
\end{itemize}

\begin{claim}\label{claim_diam}
There exists $C<\infty$ independent of $\ell$ such that 
\begin{equation}
C^{-1}\leq \textrm{diam}(\hat{M}^\ell_{-1})\leq C.
\end{equation}
\end{claim}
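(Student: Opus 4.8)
The plan is to prove the two inequalities separately, exploiting the normalization that the eccentricity ratio $a/b$ equals $2$ for the first time at the extinction-shifted parameter. First observe that, by the choice of time-shift, $\hat{M}^\ell_t$ becomes extinct at $t=0$, so $\hat{M}^\ell_{-1}$ is a genuine time slice one unit before extinction; and by the eccentricity normalization there is some $t_\ell \le 0$ with $a(t_\ell)/b(t_\ell)=2$ and $a(t)/b(t) < 2$ for $t<t_\ell$. The key point, which I will establish using barriers, is that this normalization pins down the scale: the minor radius $b(t_\ell)$ is bounded above and below by universal constants. Indeed, near the central region the flow is trapped between an inner shrinking sphere and an outer shrinking cylinder of comparable radii (this is how one sees the extinction time is comparable to one on the original $M^\ell_t$), so once we rescale to make $a/b = 2$ first happen, the geometry at that moment — and hence at the nearby time $t=-1$, provided $t_\ell$ and the extinction time straddle $-1$ appropriately — is controlled on both sides.

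The lower bound $\mathrm{diam}(\hat{M}^\ell_{-1}) \ge C^{-1}$ is the easy direction: since $\hat{M}^\ell_{-1}$ is a closed hypersurface that has not yet become extinct, and by avoidance with an interior shrinking sphere of definite size (the inner barrier survives up to a time comparable to one before its own extinction), $\hat{M}^\ell_{-1}$ must contain a ball of universal radius, giving a universal lower bound on the diameter. Alternatively one argues directly: the minor radius $b(-1)$ is bounded below because the cross-sectional sphere $S^j$ shrinks at the known rate $\sqrt{2j|t|}$ up to errors controlled by the cylindrical barrier, so $b(-1)$ is comparable to $1$, forcing $\mathrm{diam} \ge 2b(-1) \ge C^{-1}$.

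For the upper bound $\mathrm{diam}(\hat{M}^\ell_{-1}) \le C$ — which I expect to be the main obstacle — the issue is that a priori the rescaled cylinder could still be extremely long (recall $T_\ell < -1$, and the solutions $\hat M^\ell_t$ live on longer and longer time intervals). The resolution is precisely the eccentricity normalization: we only look at times $t \in [-1,0)$, i.e. after the first moment $t_\ell$ at which $a/b = 2$, so the major radius is at most twice the minor radius. Here one must check that $t_\ell \le -1$, i.e. that the first time the ratio hits $2$ occurs before $t=-1$; this follows because the original capped cylinders $M^\ell$ start with ratio $a/b$ comparable to $\ell \to \infty$, which is $\ge 2$ already at the initial time, so after the extinction-shift the ratio is $\ge 2$ throughout $[T_\ell, 0)$ — wait, one must instead track that the time-shift is chosen so $a/b = 2$ first at some definite parameter, and a barrier comparison (inner sphere, outer cylinder) shows this parameter is bounded away from $0$ by a universal constant, hence $\le -1$ after possibly adjusting the comparison, so that at $t=-1$ we already have $a(-1) \le 2 b(-1) \le C$. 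Combined with $b(-1)\le C$ from the cylinder barrier, we get $\mathrm{diam}(\hat M^\ell_{-1}) \le 2(a(-1)+b(-1)) \le C$.

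In summary: the lower bound is barrier avoidance with an interior sphere; the upper bound combines the outer cylindrical barrier (bounding $b$) with the eccentricity normalization (bounding $a/b$ at the relevant time), the crux being to show via inner/outer barriers that the first-time-ratio-$2$ parameter is separated from the extinction time by a universal amount, so that $t=-1$ sits in the controlled regime.
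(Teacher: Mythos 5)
Your proposal misreads the normalization, and as a result much of it is spent controlling a quantity that is fixed by construction. The rescaling of $M^\ell_t$ has two free parameters (a scale $\lambda$ and a time translation), and the two displayed bullet conditions pin down both: extinction at $t=0$ fixes the translation, and requiring that $a/b$ hit $2$ for the first time fixes the scale so that this happens precisely at $t=-1$. There is no floating $t_\ell$ whose location must be determined by barriers --- it equals $-1$ by definition, and moreover $a(t)/b(t) > 2$ for $T_\ell\le t<-1$ (not $<2$ as you wrote: the capped cylinders start very eccentric and become rounder). Consequently the whole portion of your proposal aimed at showing that ``the first-time-ratio-$2$ parameter is separated from the extinction time by a universal amount'' addresses a non-issue.

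With the normalization understood, the proof is a short barrier argument, and your barrier comparisons point in the wrong directions. For the lower bound one compares with an \emph{exterior} sphere: $\hat M^\ell_{-1}$ is contained in a ball of radius $\textrm{diam}(\hat M^\ell_{-1})$, which shrinks to a point by time $-1+\textrm{diam}^2/(2n)$, so extinction at $t=0$ forces $\textrm{diam}\ge\sqrt{2n}$. Your ``avoidance with an interior shrinking sphere'' to deduce that ``$\hat M^\ell_{-1}$ must contain a ball of universal radius'' is backwards: an interior barrier together with extinction at $t=0$ gives an \emph{upper} bound on the inscribed radius (the interior sphere must vanish by $t=0$), not a lower one. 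For the upper bound it is precisely the interior sphere that does the work: by convexity, the $O_{j+1}\times O_{n-j}$ symmetry and $a(-1)=2b(-1)$, the region enclosed by $\hat M^\ell_{-1}$ contains a ball of radius comparable to $b(-1)$, so avoidance and extinction at $t=0$ give $b(-1)\le C$, whence $\textrm{diam}\le 2\sqrt{a(-1)^2+b(-1)^2}=2\sqrt 5\,b(-1)\le C$. Your attribution of the bound $b(-1)\le C$ to an ``outer cylindrical barrier'' again has the wrong direction: an exterior cylinder together with extinction at $t=0$ yields the \emph{lower} bound $b(-1)\ge\sqrt{2j}$, not an upper bound.
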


\begin{proof}
Since the flow becomes extinct in one unit of time, the lower bound on the diameter follows from comparison with spheres. Using convexity and $\tfrac{a(-1)}{b(-1)}=2$, this also implies an upper bound on the diameter.
\end{proof}

\begin{claim}
$lim_{\ell\rightarrow\infty}T_\ell=-\infty$
\end{claim}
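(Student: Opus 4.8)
The plan is to argue by contradiction: suppose along a subsequence $T_\ell\to T_\infty>-\infty$ (the sequence $T_\ell$ is bounded below by $-\infty$ trivially, so a bounded-below assumption means $\liminf T_\ell>-\infty$). We want to derive a contradiction with the defining property that $a/b=2$ is attained for the \emph{first} time at the rescaled initial time. First I would recall what rescaling was done: the original flow $M^\ell_t$ starts at a cylinder of length $2\ell$ capped at scale one, becomes extinct at a time comparable to one, and we rescale by some factor $\mu_\ell$ and translate time so that extinction happens at $t=0$ and the eccentricity ratio $a/b$ first hits $2$ at $t=T_\ell$. The key point is that the rescaling factor $\mu_\ell\to\infty$ as $\ell\to\infty$: the un-rescaled surface has eccentricity comparable to $\ell$ at its initial time but extinction eccentricity $1$ (it collapses to a round point), and the first time the \emph{rescaled} eccentricity is $2$ corresponds, in the original flow, to a definite "amount of roundening" having occurred; since the original initial eccentricity $\sim\ell\to\infty$, the original flow takes longer and longer (in rescaled units, when normalized to extinction time $1$) to get down to eccentricity $2$. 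Concretely, by Huisken's asymptotics the eccentricity of $M^\ell_t$ decreases to $1$ as $t$ approaches extinction, and for it to pass through $2$ the flow must still be far from extinction — roughly, at rescaled time $T_\ell$ the surface still contains a long almost-cylindrical piece whose length-to-radius ratio is large, growing with $\ell$.

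Now the main step: I would use the \emph{Global Convergence Theorem} (Theorem~\ref{Glob_Conv}) together with the \emph{Structure Theorem} (Theorem~\ref{structure_thm}). Because the $M^\ell$ (hence $\hat M^\ell_t$) are uniformly $\alpha$-Andrews with $\alpha=\alpha(n)$, and because of Claim~\ref{claim_diam} the surfaces $\hat M^\ell_{-1}$ have diameter bounded above and below and hence (by $\alpha$-Andrews, controlling $H$ from the inball radius) bounded curvature at a definite point, one can center appropriately and extract a smooth limit flow $\hat M^\infty_t$ on $\R^{n+1}\times(T_\infty,0]$ — but also, centering instead at the tips and using $r_k=|T_\ell|^{1/2}\to\infty$ (if $T_\ell$ were bounded this would fail — that's the contradiction we want) ... let me reorganize. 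The cleaner route: if $\liminf T_\ell = T_\infty > -\infty$, then the rescaled flows $\hat M^\ell_t$ are defined on a \emph{uniformly bounded} time interval $[T_\ell,0)\subset[T_\infty - 1, 0)$ roughly, and they are $\alpha$-Andrews and become extinct at $t=0$. By comparison with spheres, a flow that is $\alpha$-Andrews and extinct at time $0$ and defined from some initial time $T_\infty$ has diameter bounded by $C(|T_\infty|)$ — a bound \emph{independent of $\ell$}. On the other hand, the rescaling was chosen precisely so that $a/b = 2$ at $t = T_\ell$; tracing back, at the rescaled initial time the surface is a capped cylinder of rescaled length comparable to $\mu_\ell \cdot(\text{something})$ — and I claim this rescaled length, i.e. $\mathrm{diam}(\hat M^\ell_{T_\ell})$, must go to infinity with $\ell$. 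This is because the inradius $b(T_\ell)$ (minor radius) is bounded below by comparison (the flow survives until $t=0$), while $a(T_\ell) = 2 b(T_\ell)$, yet the surface at time $T_\ell$ is the rescaled image of a surface which, at the corresponding original time, still has a cylindrical region of original length $\gtrsim \ell^{1-o(1)}$ (it has only evolved for a time much less than needed to destroy the cylinder, since eccentricity is still $2$, far from the extinction value $1$), so after rescaling to make $b\sim 1$ the diameter is still $\to\infty$. That contradicts the $\ell$-independent diameter bound.

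The hard part — and where I'd spend the real effort — is making precise the claim that \emph{at the rescaled time $T_\ell$ the surface has diameter tending to infinity}, equivalently that the rescaling factor $\mu_\ell$ does not grow too fast relative to the original geometry, equivalently controlling how the eccentricity $a(t)/b(t)$ evolves. The clean way is: (i) lower bound $b(T_\ell)$ by a constant via sphere comparison (since extinction is at $t=0$ and $T_\ell \le -1$, so there's at least one unit of time left, forcing $b(T_\ell)\ge c$); (ii) show $a(T_\ell)$ is also bounded (it equals $2b(T_\ell)$ and $b$ is bounded above by Claim~\ref{claim_diam}-type reasoning since $a/b=2$ persists... wait, at $t=-1$ we know the ratio is $2$ and diam is bounded, but $T_\ell$ could be much earlier). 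So actually the cleanest contradiction uses time $t=-1$: if $T_\ell\to T_\infty>-\infty$, apply Global Convergence centered at a tip of $\hat M^\ell_{-1}$ with scale $r_\ell\to\infty$ — this is impossible since the flow only exists on the bounded interval $(T_\infty,0]$; but wait, we don't yet know curvatures at tips are bounded... Here is the fix: \emph{if} $T_\ell\to T_\infty>-\infty$, then after passing to a limit via Theorem~\ref{Glob_Conv} and Claim~\ref{claim_diam} we get a nonempty smooth ancient-type flow $\hat M^\infty_t$ on $(T_\infty,0]$ which is $\alpha$-Andrews, compact, convex (by the convexity estimate, Theorem~\ref{thm_conv}, applied at large scales the limit is convex), becomes extinct at $0$, and has $a/b\ge 2$ somewhere — in particular is \emph{not} a shrinking sphere. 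But a compact convex $\alpha$-Andrews flow on a maximal interval $(T_\infty, 0]$ extinct at $0$: extending backwards, either it exists for all time (ancient) — then by the Structure Theorem and the analysis in Theorem~\ref{Comp_Thm} it must be a round sphere if its rescaled diameter were bounded, so its rescaled diameter is unbounded, meaning the \emph{original} rescaled diameters $\mathrm{diam}(\hat M^\ell_{-1})$ were unbounded, contradicting Claim~\ref{claim_diam} — or it has a genuine initial singularity at $T_\infty$, which contradicts smoothness. Either way we reach a contradiction. So $T_\infty=-\infty$, i.e. $\lim_{\ell\to\infty}T_\ell=-\infty$. I expect the subtlety to lie in the convexity/compactness of the limit and in correctly invoking Claim~\ref{claim_diam}; the rest is barrier comparison.
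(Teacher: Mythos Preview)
Your proposal has two genuine problems.

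First, you have misread the setup: the ratio $a/b$ first equals $2$ at time $t=-1$, not at the initial time $t=T_\ell$. The time $T_\ell$ is the rescaled image of the original start time, so $\hat M^\ell_{T_\ell}$ is the rescaled capped cylinder and has $a(T_\ell)/b(T_\ell)\sim\ell$, not $2$. Your explicit statement ``$a(T_\ell)=2b(T_\ell)$'' is therefore wrong, and it conflicts with your own later remark that the surface at $T_\ell$ still contains a long cylindrical piece.

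Second, and more seriously, the step ``a flow that is $\alpha$-Andrews and extinct at time $0$ and defined from some initial time $T_\infty$ has diameter bounded by $C(|T_\infty|)$'' is false. A capped cylinder of minor radius $1$ and length $2\ell$ becomes extinct in time comparable to $1$ regardless of $\ell$; extinction time is controlled by the minor radius, not the diameter. So no contradiction arises simply from $\mathrm{diam}(\hat M^\ell_{T_\ell})$ being large. Your fallback route via global convergence and ``extending backwards'' is also not justified: there is no backward-extension theorem for mean curvature flow to invoke here, and the limit of the $\hat M^\ell_t$ near $t=T_\infty$ will be a noncompact infinite cylinder, so the dichotomy ``ancient or initial singularity'' you set up does not lead anywhere.

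The paper's argument is entirely elementary and avoids compactness. For any $t_0\in(T_\ell,-1)$ one has $a(t_0)/b(t_0)\ge 2$, so by convexity one can place a sphere of radius $b(t_0)/4$ inside $\hat M^\ell_{t_0}$ centered at distance $a(t_0)/2$ from the origin; this interior barrier shows $a(t)$ cannot halve in time less than $b(t_0)^2/32n$. Since $b(t)$ is nonincreasing and $b(-1)\ge\delta>0$ by Claim~\ref{claim_diam}, one gets $b(t_0)\ge\delta$ uniformly, so each halving of $a/b$ costs a fixed amount of time $\ge\delta^2/32n$. As $a(T_\ell)/b(T_\ell)\to\infty$ while $a(-1)/b(-1)=2$, the number of required halvings, and hence $|T_\ell|$, tends to infinity.
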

\begin{proof}
Consider a time $t_{0}<-1$. Using convexity and $a(t_{0})/b(t_{0}) \geq 2$, we can put a sphere of radius $b(t_{0})/{4}$ inside $\hat{M}_{t_0}$ at distance  $a(t_{0})/2$ from the origin. Thus, it takes $a(t)$ a time period of at least $b(t_{0})^2/32n$ to decrease by a factor one-half. On the other hand, $b(t)$ decreases with time and by Claim \ref{claim_diam}, we know that $b(t_{0})\geq \delta$ for some $\delta>0$. Thus, it takes the quotient $\tfrac{a(t)}{b(t)}$ a time period of at least $\delta^2/32n$ to decrease by a factor one-half.  Since $a(T_\ell)/b(T_\ell)\to\infty$ and $a(-1)/b(-1)=2$, the claim follows. 
\end{proof}

Now, by the global convergence theorem (Thm. \ref{Glob_Conv}), we see that the sequence $\hat{M}^\ell_t$ subconverges smoothly to an ancient $\alpha$-Andrews flow $M_{t}$ with compact and strictly convex time slices that becomes extinct in a round point at the origin at time $t=0$.\footnote{To see this, one applies the global convergence theorem at points $(p_\ell^k,t_\ell^k)$ with $t_\ell^k\in [-1/k,0]$ , $d(p_\ell^k,0) \leq C$  and $H(p_\ell^k,t_\ell^k)\leq Ck$ and passes to a diagonal subsequence, c.f. \cite[Lem. 3.9]{HK}.} By the condition $\tfrac{a(-1)}{b(-1)}=2$ it is certainly not the sphere. As we obtained $M_{t}$ as a limit of rescalings of $O_{j+1}\times O_{n-j}$ symmetric flows which are uniformly $(n-j+1)$-convex, $M_t$ has that symmetry and convexity as well. To finish the proof of Theorem \ref{Main_Thm}, it remains to establish the other claimed properties for $t\to -\infty$.\\

Let us start with the asymptotic shrinker. Since the flow becomes extinct in $0$ at time $0$, by comparison with spheres we have $d(0,M_t)\leq\sqrt{2n\abs{t}}$ and thus, as before (cf. \cite[Lem. 3.9]{HK}), points close to $(0,0)$ with controlled curvature. By the global convergence theorem (Thm. \ref{Glob_Conv}) and Huisken's monotonicity formula \cite{Hui2} (see also \cite[App. B]{HK}) the flows $\lambda^{-1}M_{\lambda^{2}t}$ subconverge to a nonempty selfsimilar shrinking flow $N_t$. By the structure theorem (Thm. \ref{structure_thm}), $N_t$ must be a family of shrinking spheres, cylinders or a plane. Due to symmetry reasons and the condition $a(t)/b(t)\geq 2$ for all $t\leq -1$, it must be $S^j(\sqrt{2j\abs{t}})\times \R^{n-j}$. In particular, the limit is unique and thus a full limit. The properties \ref{add_prop2} and \ref{add_prop3} in Theorem \ref{Main_Thm} follow also.\\

Finally, let us discuss the asymptotic translators. Fix a direction $v\in{0}_{\mathbb{R}^{j+1}} \times \mathbb{S}^{n-j-1}$.
Let $p_t$ be the unique point at the tip of $M_t$ in direction $v$, i.e. the unique point in $M_t$ that can be written as $\mu v$ for some $\mu>0$.
We now perform the modified type II blow-down as follows. Pick times $t_k$ such that
\begin{equation}\label{Type_II_Point_Picking}
\abs{t_k}^{1/2}H(p_{t_k},t_{k})=\max_{t\in[-k,-1]}\abs{t}^{1/2}H(p_{t}).
\end{equation}

\begin{claim}\label{claim_translators}
Let $\hat{M}_{\hat{t}}^k$ be the sequence of flows obtained by shifting $(p_{t_k},t_{k})$ to the origin and normalizing $\lambda_k=\mathrm{H}(p_{t_k},t_{k})$ to one,
explicitly $\hat{M}_{\hat{t}}^k=\lambda_k \cdot (M_{\hat{t}/\lambda_k^2+t_k}-p_{t_k})$.
Then $\hat{M}_{\hat{t}}^k$ converges to $\mathrm{Bowl}(v)^{j+1}_{t}\times\mathbb{E}^{n-j-1}$.
\end{claim}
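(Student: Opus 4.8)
The plan is to carry out Hamilton's type~II blow-up at the tip, to identify the resulting limit as a translating soliton via the rigidity case of Hamilton's Harnack inequality (Theorem~\ref{harn_equ}), and finally to pin the soliton down using the rotational symmetry and the asymptotic shrinker already established.

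\emph{Point selection, limit flow, and solitonicity.} First I would note that by property~\ref{add_prop3} together with the asymptotic shrinker, $\sup_{t\leq-1}\abs{t}^{1/2}H(p_t,t)=\infty$: near the center $M_t$ is close to the shrinking cylinder, on which $\abs{t}^{1/2}H$ stays bounded, so by convexity the type~II curvature blow-up must be realized near the tips. Hence the point selection~\eqref{Type_II_Point_Picking} produces times $t_k$ with $\abs{t_k}\to\infty$, $\lambda_k\abs{t_k}^{1/2}\to\infty$, and $H(p_s,s)\leq\lambda_k(\abs{t_k}/\abs s)^{1/2}$ for $s\in[-k,-1]$, which rescales to $H(\hat p_{\hat t},\hat t)\leq(1-\hat t/(\abs{t_k}\lambda_k^2))^{-1/2}$ along the tips of $\hat M^k$. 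The flows $\hat M^k$ are $\alpha$-Andrews, strictly convex, and have $H=1$ at the space-time origin $0$ (the tip), so by the global convergence theorem (Theorem~\ref{Glob_Conv}), applied at the tips and combined with a diagonal argument to reach all times, a subsequence converges in $C^\infty_{\mathrm{loc}}$ on $\R^{n+1}\times(-\infty,\infty)$ to an \emph{eternal}, convex, $\alpha$-Andrews flow $\hat M^\infty$ which inherits the residual symmetry $\mathrm{O}_{j+1}\times\mathrm{O}_{n-j-1}$ about the axis $\langle v\rangle$, has $H(0,0)=1$, and — writing $\hat q_{\hat t}$ for its tip in direction $v$ — satisfies $H(\hat q_{\hat t},\hat t)\leq1$ for $\hat t\geq0$. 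Each compact strictly convex $\hat M^k$ satisfies Hamilton's Harnack inequality, and since the limit is eternal (so the $\tfrac{H}{2t}$ term drops out) this inequality passes to the $C^\infty_{\mathrm{loc}}$ limit, so $\hat M^\infty$ is a convex eternal flow satisfying Hamilton's Harnack inequality. At $(0,0)$ one has $\nabla H=0$ (the tip is fixed by the symmetry), hence $\partial_tH\leq0$ (from $H(\hat q_{\hat t},\hat t)\leq1=H(0,0)$ for $\hat t\geq0$) and $\partial_tH\geq0$ (Harnack at a point with $\nabla H=0$), so $\partial_tH(0,0)=0$ and $H$ attains a critical value at the space-time point $(0,0)$. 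By Theorem~\ref{harn_equ}, $\hat M^\infty$ is a translating soliton; it translates in direction $-v$ because its outward normal at $(0,0)$ is $v$, so $\vec H=-v$ there.

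\emph{Identifying the soliton.} By the $\mathrm{O}_{n-j}$ symmetry, $M_t\cap\{x'=0\}=\partial(\bar B^{n-j}(a(t)))$ is the round sphere of the tips and is a normal section of $M_t$ at $p_t$; hence the $n-j-1$ principal curvatures of $M_t$ at $p_t$ tangent to $\mathbb{E}^{n-j-1}$ all equal $1/a(t)$. Rescaling, the corresponding principal curvatures of $\hat M^\infty$ at $(0,0)$ equal $\lim_k 1/(a(t_k)\lambda_k)$, which vanishes since $a(t_k)\geq b(t_k)\sim\sqrt{2j\abs{t_k}}$ (asymptotic shrinker) and $\lambda_k\abs{t_k}^{1/2}\to\infty$. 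So the null space of the second fundamental form of $\hat M^\infty$ at $(0,0)$ contains $\mathbb{E}^{n-j-1}$; being $\mathrm{O}_{j+1}\times\mathrm{O}_{n-j-1}$-invariant and not the whole tangent space (as $H(0,0)=1\neq0$), it equals $\mathbb{E}^{n-j-1}$. By Hamilton's strong maximum principle for the evolving second fundamental form this becomes a parallel, flow-invariant distribution, so $\hat M^\infty_{\hat t}=\Sigma^{j+1}_{\hat t}\times\mathbb{E}^{n-j-1}$ with $\Sigma^{j+1}$ a convex, $\alpha$-Andrews, rotationally symmetric translator in $\mathbb{R}^{j+1}\times\langle v\rangle$ moving in direction $-v$; by uniqueness of the rotationally invariant translator \cite{AltWu}, normalized so that $H=1$ at the tip, $\Sigma^{j+1}_{\hat t}=\mathrm{Bowl}(v)^{j+1}_{\hat t}$. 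Hence $\hat M^\infty_{\hat t}=\mathrm{Bowl}(v)^{j+1}_{\hat t}\times\mathbb{E}^{n-j-1}$, and since this limit is independent of the subsequence, the full sequence $\hat M^k_{\hat t}$ converges.

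\emph{Main difficulty.} The crux is making Theorem~\ref{harn_equ} apply to $\hat M^\infty$: a priori the limit need not have globally bounded curvature, so the validity of the Harnack inequality for $\hat M^\infty$ must be obtained by approximation from the compact $\hat M^k$ rather than quoted directly, and the rigidity must be used in its local, strong-maximum-principle form for \emph{critical} values (as in the remark after Theorem~\ref{harn_equ}). A secondary point, needed already in the point selection, is that the type~II curvature concentrates at the tips, which rests on convexity and the cylindrical asymptotics near the center.
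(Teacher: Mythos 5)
Your proposal follows the same overall strategy as the paper: pick points at the tip via \eqref{Type_II_Point_Picking}, pass to an eternal convex $\alpha$-Andrews limit via Theorem~\ref{Glob_Conv}, observe that the curvature bound forced by the point selection together with $\nabla H(0,0)=0$ (by symmetry) makes $H$ critical at $(0,0)$, invoke the rigidity case of Hamilton's Harnack inequality (Theorem~\ref{harn_equ}) to conclude solitonicity, split off a flat $\mathbb{E}^{n-j-1}$ factor by the strict maximum principle, and identify the rotationally symmetric factor with the bowl. The minor reorderings (you apply Harnack rigidity to the full limit and split afterward, the paper splits first and applies rigidity to $B_t$; you justify that the null space has rank exactly $n-j-1$ a bit more explicitly) do not change the substance.

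The one spot where your argument is noticeably softer than the paper's is the derivation of $\limsup_{t\to-\infty}\abs{t}^{1/2}H(p_t,t)=\infty$, which you need to guarantee $t_k\to-\infty$ and $\lambda_k^2\abs{t_k}\to\infty$. You argue that since the rescaled flow near the center is close to the cylinder (bounded $\abs{t}^{1/2}H$), property \ref{add_prop3} forces the type II blow-up ``near the tips by convexity.'' That localization is not immediate: property \ref{add_prop3} gives unbounded $\abs{t}^{1/2}\sup_{M_t}\abs{A}$ along a sequence, and the cylinder approximation controls only a bounded parabolic neighborhood of $(0,0)$, leaving the ``shoulder'' region undetermined; convexity alone does not obviously push the supremum to the axial tip $p_t$. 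The paper's route is cleaner and is what you actually need: since the tip moves inward by mean curvature, $\tfrac{d}{dt}r(t)=-H(p_t,t)$ where $r(t)=d(p_t,0)$, and the unbounded rescaled diameter from the asymptotic shrinker gives $\abs{t}^{-1/2}r(t)\to\infty$; integrating, a bound $\abs{t}^{1/2}H(p_t,t)\leq C$ would force $r(t_0)\leq r(-1)+2C\abs{t_0}^{1/2}$, a contradiction. You should replace your localization heuristic with this integration argument; the rest of your proof then goes through as written.
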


\begin{proof}
Let $r(t)=d(p_{t},0)$. By the proof of property \ref{add_prop2} and \ref{add_prop3} we actually have 
$\lim_{t\rightarrow -\infty}\abs{t}^{-1/2}r(t)=\infty$,
and
$\limsup_{t\rightarrow-\infty}\abs{t}^{1/2}\mathrm{H}(p_{t})=\infty$.
In particular, $t_{k}\rightarrow-\infty$ as $k\rightarrow\infty$. By construction the flows $\hat{M}_{\hat{t}}^k$ satisfy $H(0,0)=1$ and are defined for $-\infty<\hat{t}<\lambda_k^2\abs{t_k}\to\infty$. Moreover, by condition (\ref{Type_II_Point_Picking}) we have:  
\begin{equation}\label{Lower_Curv_After}
\hat{H}^{2}(\hat{p},\hat{t})=
\frac{H^{2}\left(p,\hat{t}/\lambda_k^2+ t_{k}\right)}{\lambda_k^2}\leq
\left|\frac{t_{k}}{\hat{t}/\lambda_k^2+ t_{k}}\right| =\left|\frac{1}{1-\hat{t}/(\lambda_k^2\abs{t_k})}\right| \rightarrow1
\end{equation} 
for $p$ at the tip and all $\hat{t}$ with $0\leq \hat{t}<\lambda_k^2(\abs{t_{k}}-1)$.

By the global convergence theorem (Thm. \ref{Glob_Conv}),  $\hat{M}_{\hat{t}}^k$ subconverges to an eternal convex $\alpha$-Andrews flow $\{N_t\}_{t\in(-\infty,\infty)}$ with $H>0$ everywhere (as it is not flat). Note that
\begin{equation}
\hat{r}_{k}=\lambda_k r(t_{k}) =\lambda_k\abs{t_k}^{1/2}\abs{t_k}^{-1/2}r(t_{k}) \rightarrow\infty.
\end{equation}
Thus, $\kappa_1=\ldots=\kappa_{n-j-1}=0$, and by the strict maximum principle, the limit splits orthogonally as $N_t=B_t \times \mathbb{E}^{n-j-1}$, where $B_t$ is strictly convex.

Note that $B_t$ is $O_{j+1}$ symmetric. In particular, $\nabla H=0$ at the origin. Since $B_t$ arises as a smooth limit of compact solutions it satisfies Hamilton's Harnack inequality \cite[Thm. A]{Ham}, in particular $\partial_t H\geq 0$. Together with equation (\ref{Lower_Curv_After}) this implies $\partial_{t}H=0$ at the origin (for all times $t>0$). Thus, by the equality case of Hamilton's Harnack inequality (Thm. \ref{harn_equ}), $B_t$ must be a translating soliton. By rotational symmetry, it must be the bowl. Finally, due to uniqueness, the subsequential limit is actually a full limit.
\end{proof}

\begin{remark}
We emphasize that contrary to the standard type II blow-up procedure (see e.g. \cite[Sec. 4]{HS2}), we did not pick points $(p,t)$ with maximal $\abs{t}^{1/2}\mathrm{H}(p,t)$ over all times and points in those time slots. Since we didn't know a-priori the curvature is maximal at the tip, we needed the full strength of the global convergence theorem (Thm. \ref{Glob_Conv}) to pass to a smooth limit.
\end{remark}

\section{Proof of Theorem {\ref{Comp_Thm}}}
By the convexity estimate (Thm. \ref{thm_conv}) ancient $\alpha$-Andrews flows are convex.
Now, arguing as in the proof of Theorem \ref{Main_Thm}, we can find an asymptotic shrinker that must either be a round shrinking cylinder $S^j\times \R^{n-j}$ or a round shrinking sphere $S^n$. However, any of the assumptions a) -- c) excludes the cylinders (note that c) implies b) by integration). Thus, by the equality case of Huisken's monotonicity formula \cite{Hui2} (see also \cite[App. B]{HK}) the flow $\{M_t\}$ must be a family of round shrinking spheres.

\bibliographystyle{alpha}
\bibliography{ancient}

\vspace{10mm}
{\sc Robert Haslhofer and Or Hershkovits, Courant Institute of Mathematical Sciences, New York University, 251 Mercer Street, New York, NY 10012, USA}\\

\emph{E-mail:} robert.haslhofer@cims.nyu.edu, or.hershkovits@cims.nyu.edu

\end{document}